\newtheorem{theorem}{Theorem}
\theoremstyle{plain}
\newtheorem{definition}{Definition}
\newtheorem{lemma}{Lemma}
\newtheorem{remark}{Remark}
\numberwithin{equation}{section}
\begin{document}
\title[Two New Convex Dominated Functions]{Hermite-Hadamard-Type Inqualities
for New Different Kinds of Convex Dominated Functions}
\author{M.Emin \"{O}zdemir$^{\blacktriangle }$}
\address{$^{\blacktriangle }$Atat\"{u}rk University, K.K. Education Faculty,
Department of Mathematics, 25240, Campus, Erzurum, Turkey}
\email{emos@atauni.edu.tr}
\author{Havva Kavurmac\i $^{\blacktriangle ,\blacksquare }$}
\email{hkavurmaci@atauni.edu.tr}
\thanks{$^{\blacksquare }$Corresponding Author}
\author{Mevl\"{u}t Tun\c{c}}
\address{Department of Mathematics, Faculty of Art and Sciences, Kilis 7
Aralik University, Kilis, 79000, Turkey}
\email{mevluttunc@kilis.edu.tr}
\date{February 2, 2012}
\subjclass[2000]{ Primary 26D15, Secondary 26D10, 05C38}
\keywords{$m-$convex dominated function, Hermite-Hadamard's inequality, $%
\left( \alpha ,m\right) -$convex function, $r-$convex function.}

\begin{abstract}
In this paper, we establish several new convex dominated functions and then
we obtain new Hadamard type inequalities.
\end{abstract}

\maketitle

\section{Introduction}

The inequality%
\begin{equation}
f\left( \frac{a+b}{2}\right) \leq \frac{1}{b-a}\int_{a}^{b}f\left( x\right)
dx\leq \frac{f\left( a\right) +f\left( b\right) }{2}  \label{h}
\end{equation}%
which holds for all convex functions $f:[a,b]\rightarrow 
\mathbb{R}
$, is known in the literature as Hermite-Hadamard's inequality.

In \cite{G}, Toader defined $m-$convexity as the following:

\begin{definition}
The function $f:[0,b]\rightarrow 
\mathbb{R}
,$ $b>0$, is said to be $m-$convex where $m\in \lbrack 0,1],$ if we have%
\begin{equation*}
f(tx+m(1-t)y)\leq tf(x)+m(1-t)f(y)
\end{equation*}%
for all $x,y\in \lbrack 0,b]$ and $t\in \lbrack 0,1].$ We say that $f$ is $%
m- $concave if $\left( -f\right) $ is $m-$convex.
\end{definition}

In \cite{D}, Dragomir proved the following theorem.

Let $f:\left[ 0,\infty \right) \rightarrow 
\mathbb{R}
$ be an $m-$convex function with $m\in \left( 0,1\right] $ and $0\leq a<b.$
If $f\in L_{1}\left[ a,b\right] ,$ then the following inequalities hold:%
\begin{eqnarray}
f\left( \frac{a+b}{2}\right) &\leq &\frac{1}{b-a}\int_{a}^{b}\frac{f\left(
x\right) +mf\left( \frac{x}{m}\right) }{2}dx  \label{h1} \\
&\leq &\frac{1}{2}\left[ \frac{f\left( a\right) +mf\left( \dfrac{a}{m}%
\right) }{2}+m\frac{f\left( \frac{b}{m}\right) +mf\left( \dfrac{b}{m^{2}}%
\right) }{2}\right] .  \notag
\end{eqnarray}

In \cite{DI} and \cite{DPP}, the authors connect together some disparate
threads through a Hermite-Hadamard motif. The first of these threads is the
unifying concept of a $g-$convex dominated function. Similarly, in \cite{KOS}%
, Kavurmac\i\ et al. introduced the following class of functions and then
proved a theorem for this class of functions related to (\ref{h1}).

\begin{definition}
Let $g:\left[ 0,b\right] \rightarrow 
\mathbb{R}
$ be a given $m-$convex function on the interval $\left[ 0,b\right] $. The
real function $f:\left[ 0,b\right] \rightarrow 
\mathbb{R}
$ is called $\left( g,m\right) -$convex dominated on $\left[ 0,b\right] $ if
the following condition is satisfied%
\begin{eqnarray*}
&&\left\vert \lambda f(x)+m(1-\lambda )f(y)-f\left( \lambda x+m\left(
1-\lambda \right) y\right) \right\vert \\
&\leq &\lambda g(x)+m(1-\lambda )g(y)-g\left( \lambda x+m\left( 1-\lambda
\right) y\right)
\end{eqnarray*}%
for all $x,y\in \left[ 0,b\right] $, $\lambda \in \left[ 0,1\right] $ and $%
m\in \left[ 0,1\right] .$
\end{definition}

\begin{theorem}
\label{a} Let $g:\left[ 0,\infty \right) \rightarrow 
\mathbb{R}
$ be an $m-$convex function with $m\in \left( 0,1\right] $. $f:\left[
0,\infty \right) \rightarrow 
\mathbb{R}
$ is $\left( g,m\right) -$convex dominated \ mapping and $0\leq a<b.$ If $%
f\in L_{1}\left[ a,b\right] ,$ then one has the inequalities:%
\begin{eqnarray*}
&&\left\vert \frac{1}{b-a}\int_{a}^{b}\frac{f\left( x\right) +mf\left( \frac{%
x}{m}\right) }{2}dx-f\left( \frac{a+b}{2}\right) \right\vert \\
&& \\
&\leq &\frac{1}{b-a}\int_{a}^{b}\frac{g\left( x\right) +mg\left( \frac{x}{m}%
\right) }{2}dx-g\left( \frac{a+b}{2}\right)
\end{eqnarray*}%
and%
\begin{eqnarray*}
&&\left\vert \frac{1}{2}\left[ \frac{f\left( a\right) +mf\left( \dfrac{a}{m}%
\right) }{2}+m\frac{f\left( \frac{b}{m}\right) +mf\left( \dfrac{b}{m^{2}}%
\right) }{2}\right] -\frac{1}{b-a}\int_{a}^{b}\frac{f\left( x\right)
+mf\left( \frac{x}{m}\right) }{2}dx\right\vert \\
&& \\
&\leq &\frac{1}{2}\left[ \frac{g\left( a\right) +mg\left( \dfrac{a}{m}%
\right) }{2}+m\frac{g\left( \frac{b}{m}\right) +mg\left( \dfrac{b}{m^{2}}%
\right) }{2}\right] -\frac{1}{b-a}\int_{a}^{b}\frac{g\left( x\right)
+mg\left( \frac{x}{m}\right) }{2}dx.
\end{eqnarray*}
\end{theorem}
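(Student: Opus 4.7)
The plan is to derive each inequality directly from the defining estimate of a $(g,m)$-convex dominated function by choosing suitable arguments $x$, $y$, $\lambda$ in that inequality, integrating over a parameter $t\in[0,1]$, and then applying the triangle inequality $\left|\int h(t)\,dt\right|\leq\int|h(t)|\,dt$. This mirrors the standard derivation of (\ref{h1}), with the $(g,m)$-convex dominated bound replacing ordinary $m$-convexity.

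For the first inequality I would set $\lambda=1/2$, $x=ta+(1-t)b$, and $y=(1-t)\tfrac{a}{m}+t\tfrac{b}{m}$, so that $my=(1-t)a+tb$ and hence $\tfrac{x+my}{2}=\tfrac{a+b}{2}$. Substituting into the defining inequality and integrating over $t\in[0,1]$, the change of variables $x=ta+(1-t)b$ turns $\int_0^1 f(ta+(1-t)b)\,dt$ into $\tfrac{1}{b-a}\int_a^b f(x)\,dx$, and the further change $u=(1-t)\tfrac{a}{m}+t\tfrac{b}{m}$ followed by $u=x/m$ turns $\int_0^1 f\bigl((1-t)\tfrac{a}{m}+t\tfrac{b}{m}\bigr)\,dt$ into $\tfrac{m}{b-a}\int_a^b f(x/m)\,dx$. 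The triangle inequality then yields exactly the first claimed bound, with the identical transformations on the $g$-side producing the matching right-hand expression.

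For the second inequality I would apply the defining inequality twice, both times with $\lambda=t$: first with $(x,y)=(a,b/m)$, using $ta+m(1-t)(b/m)=ta+(1-t)b$, and then with $(x,y)=(a/m,b/m^{2})$, which produces $f\!\left(\tfrac{ta+(1-t)b}{m}\right)$ inside the absolute value. Multiplying the second estimate by $m$, adding it to the first via $|A|+|B|\geq|A+B|$, and dividing by $2$ yields the pointwise-in-$t$ bound
\[
\left|\frac{t[f(a)+mf(a/m)]+m(1-t)[f(b/m)+mf(b/m^{2})]}{2}-\frac{f(ta+(1-t)b)+mf\!\bigl(\tfrac{ta+(1-t)b}{m}\bigr)}{2}\right|\leq(\text{same in }g).
\]
Integrating over $t\in[0,1]$, using $\int_0^1 t\,dt=\int_0^1(1-t)\,dt=\tfrac12$ on the endpoint-weighted side and the substitution $x=ta+(1-t)b$ on the other, recovers the endpoint average on the left and $\tfrac{1}{b-a}\int_a^b\tfrac{f(x)+mf(x/m)}{2}\,dx$ in the middle, with the analogous $g$-expression as the upper bound.

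The argument is essentially bookkeeping; the only place calling for a bit of care is the second inequality, where one must pick the $y$-coordinates as $b/m$ and $b/m^{2}$ (rather than $b$ and $b/m$) so that, after multiplying by $m$ and adding, the inner arguments $ta+(1-t)b$ and $\tfrac{ta+(1-t)b}{m}$ line up perfectly and the $t$-integral of their average reproduces the composite mean $\tfrac{1}{b-a}\int_a^b\tfrac{f(x)+mf(x/m)}{2}\,dx$ rather than a displaced integral. No other obstacle is anticipated.
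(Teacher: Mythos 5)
Your proposal is correct and takes essentially the same route as the paper: the paper proves the more general Theorem \ref{t1} by exactly these choices (the $\lambda=\frac{1}{2}$ substitution with $x=ta+(1-t)b$, $y=(1-t)\frac{a}{m}+t\frac{b}{m}$ for the first bound; the pair $(a,\frac{b}{m})$ and $(\frac{a}{m},\frac{b}{m^{2}})$ with the second estimate multiplied by $m$, added via the modulus inequality, then integrated, for the second) and recovers Theorem \ref{a} as the case $\alpha=1$. One bookkeeping slip in your first part: the substitutions $u=(1-t)\frac{a}{m}+t\frac{b}{m}$ followed by $u=x/m$ give $\int_{0}^{1}f\bigl((1-t)\frac{a}{m}+t\frac{b}{m}\bigr)\,dt=\frac{1}{b-a}\int_{a}^{b}f\bigl(\frac{x}{m}\bigr)\,dx$ (the Jacobian factors $\frac{m}{b-a}$ and $\frac{1}{m}$ cancel), not $\frac{m}{b-a}\int_{a}^{b}f\bigl(\frac{x}{m}\bigr)\,dx$; the factor $m$ in the final bound comes solely from the coefficient $m$ already present in the dominated-convexity inequality, and with this correction the first inequality comes out exactly as stated.
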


In \cite{MIH}, definition of $\left( \alpha ,m\right) -$convexity was
introduced by Mihe\c{s}an as the following.

\begin{definition}
\label{d1} The function $f:[0,b]\rightarrow 
\mathbb{R}
,$ $b>0$, is said to be $\left( \alpha ,m\right) -$convex, where $\left(
\alpha ,m\right) \in \lbrack 0,1]^{2},$ if we have%
\begin{equation*}
f(tx+m(1-t)y)\leq t^{\alpha }f(x)+m(1-t^{\alpha })f(y)
\end{equation*}%
for all $x,y\in \lbrack 0,b]$ and $t\in \lbrack 0,1].$
\end{definition}

Denote by $K_{m}^{\alpha }\left( b\right) $ the class of all $\left( \alpha
,m\right) -$convex functions on $\left[ 0,b\right] $ for which $f\left(
0\right) \leq 0.$ If we take $\left( \alpha ,m\right) =\left\{ \left(
0,0\right) ,\left( \alpha ,0\right) ,\left( 1,0\right) ,\left( 1,m\right)
,\left( 1,1\right) ,\left( \alpha ,1\right) \right\} ,$ it can be easily
seen that $\left( \alpha ,m\right) -$convexity reduces to increasing: $%
\alpha -$starshaped, starshaped, $m-$convex, convex and $\alpha -$convex,
respectively.

In \cite{SSOR}, Set et al. proved the following Hadamard type inequalities
for $\left( \alpha ,m\right) -$convex functions.

\begin{theorem}
Let $f:\left[ 0,\infty \right) \rightarrow 
\mathbb{R}
$ be an $\left( \alpha ,m\right) -$convex function with $\left( \alpha
,m\right) \in \left( 0,1\right] ^{2}.$ If \ $0\leq a<b<\infty $ and $f\in $ $%
L_{1}\left[ a,b\right] \cap L_{1}\left[ \frac{a}{m},\frac{b}{m}\right] ,$
then the following inequality holds:%
\begin{equation}
f\left( \frac{a+b}{2}\right) \leq \frac{1}{b-a}\int_{a}^{b}\frac{f\left(
x\right) +m\left( 2^{\alpha }-1\right) f\left( \frac{x}{m}\right) }{%
2^{\alpha }}dx.  \label{h2}
\end{equation}
\end{theorem}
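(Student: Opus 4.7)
The plan is to derive the inequality from the $(\alpha,m)$-convexity definition evaluated at $t=1/2$, mirroring the classical proof of the left side of the Hermite-Hadamard inequality. The key observation is that for any $x\in[a,b]$ one can write
\[
\frac{a+b}{2}=\frac{1}{2}\,x+\frac{m}{2}\cdot\frac{a+b-x}{m},
\]
which fits exactly the pattern $tx+m(1-t)y$ of Definition \ref{d1} with $t=1/2$, $y=(a+b-x)/m$.

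First I would apply Definition \ref{d1} with $t=1/2$ to this decomposition, obtaining the pointwise bound
\[
f\!\left(\frac{a+b}{2}\right)\le \frac{1}{2^{\alpha}}f(x)+m\!\left(1-\frac{1}{2^{\alpha}}\right)f\!\left(\frac{a+b-x}{m}\right)
=\frac{f(x)+m(2^{\alpha}-1)f\!\left(\frac{a+b-x}{m}\right)}{2^{\alpha}}.
\]
Next I would integrate both sides with respect to $x$ over $[a,b]$, pulling the (constant) left-hand side out of the integral to produce a factor of $(b-a)$.

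The last step is a simple change of variables $u=a+b-x$ in the integral $\int_{a}^{b}f\!\left(\frac{a+b-x}{m}\right)dx$; this interval is symmetric about $(a+b)/2$, so the substitution returns $\int_{a}^{b}f(u/m)\,du$. Both pieces on the right are finite by the hypothesis $f\in L_{1}[a,b]\cap L_{1}[a/m,b/m]$. Dividing through by $(b-a)$ gives precisely (\ref{h2}).

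There is no real obstacle in this argument; the only point that needs care is verifying that the pointwise application of $(\alpha,m)$-convexity is legitimate, which requires $x\in[0,b]$ and $(a+b-x)/m\in[0,b]$ for the underlying domain of $f$ to be respected. Since $0\le a<b<\infty$ and $m\in(0,1]$, both $x$ and $(a+b-x)/m$ lie in $[0,\infty)$, matching the domain of $f$, so Definition \ref{d1} applies and the chain of manipulations is valid.
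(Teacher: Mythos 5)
Your argument is correct and is essentially the standard proof of this cited result, the same device the paper itself uses for its dominated analogue in Theorem \ref{t1}: apply the $(\alpha,m)$-convexity inequality with $t=\tfrac{1}{2}$ at a pair of points whose combination is $\tfrac{a+b}{2}$, then integrate. Your parameterization by $x\in[a,b]$ with the reflection $u=a+b-x$ is just a change of variables away from the usual choice $x=ta+(1-t)b$, $y=\frac{(1-t)a+tb}{m}$ integrated over $t\in[0,1]$, so there is no substantive difference.
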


\begin{theorem}
Let $f:\left[ 0,\infty \right) \rightarrow 
\mathbb{R}
$ be an $\left( \alpha ,m\right) -$convex function with $\left( \alpha
,m\right) \in \left( 0,1\right] ^{2}.$ If \ $0\leq a<b<\infty $ and $f\in $ $%
L_{1}\left[ a,b\right] ,$ then the following inequality holds:%
\begin{equation}
\frac{1}{b-a}\int_{a}^{b}f\left( x\right) dx\leq \frac{1}{2}\left[ \frac{%
f\left( a\right) +f\left( b\right) +m\alpha f\left( \frac{a}{m}\right)
+m\alpha f\left( \frac{b}{m}\right) }{\alpha +1}\right] .  \label{h3}
\end{equation}
\end{theorem}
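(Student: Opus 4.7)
\bigskip

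\noindent\textbf{Proof Plan.} The plan is to apply the $(\alpha,m)$-convexity inequality directly to two convex combinations lying in $[a,b]$, then integrate in the parameter $t$ over $[0,1]$.

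\medskip

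\noindent First I would parametrize the interval of integration. For $t\in[0,1]$, observe that
\[
ta+(1-t)b \;=\; ta+m(1-t)\tfrac{b}{m},\qquad tb+(1-t)a \;=\; tb+m(1-t)\tfrac{a}{m},
\]
so applying Definition \ref{d1} with $x=a,\,y=b/m$ in the first case and $x=b,\,y=a/m$ in the second gives
\[
f\bigl(ta+(1-t)b\bigr)\leq t^{\alpha}f(a)+m(1-t^{\alpha})f\!\left(\tfrac{b}{m}\right),
\]
\[
f\bigl(tb+(1-t)a\bigr)\leq t^{\alpha}f(b)+m(1-t^{\alpha})f\!\left(\tfrac{a}{m}\right).
\]
Adding these two inequalities yields a pointwise (in $t$) bound on $f(ta+(1-t)b)+f(tb+(1-t)a)$.

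\medskip

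\noindent Next I would integrate over $t\in[0,1]$. The linear change of variable $u=ta+(1-t)b$ (respectively $u=tb+(1-t)a$) shows that each of the two integrals on the left equals $\frac{1}{b-a}\int_{a}^{b}f(x)\,dx$, so the left-hand side becomes $\frac{2}{b-a}\int_{a}^{b}f(x)\,dx$. For the right-hand side I would use the elementary computations
\[
\int_{0}^{1}t^{\alpha}\,dt=\frac{1}{\alpha+1},\qquad \int_{0}^{1}\bigl(1-t^{\alpha}\bigr)\,dt=\frac{\alpha}{\alpha+1},
\]
which produce exactly $\frac{1}{\alpha+1}\bigl[f(a)+f(b)+m\alpha f(a/m)+m\alpha f(b/m)\bigr]$. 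Dividing by $2$ gives the desired inequality \eqref{h3}.

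\medskip

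\noindent The only subtle point (rather than a genuine obstacle) is choosing the auxiliary point $y$ in the definition of $(\alpha,m)$-convexity so that the convex combination $tx+m(1-t)y$ sweeps out $[a,b]$ as $t$ varies; the choices $y=b/m$ and $y=a/m$ are forced by the requirement that the endpoint values at $t=0$ be $b$ and $a$ respectively, and they are also precisely what produces the terms $f(a/m)$ and $f(b/m)$ on the right of \eqref{h3}. The integrability hypothesis $f\in L_{1}[a,b]$ is exactly what is needed to justify the application of Fubini/linear change of variables on the left. Everything else is routine.
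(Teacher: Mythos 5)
Your proof is correct: the two specializations $x=a,\ y=b/m$ and $x=b,\ y=a/m$ of the $(\alpha,m)$-convexity inequality, added and integrated over $t\in[0,1]$ with $\int_0^1 t^{\alpha}dt=\tfrac{1}{\alpha+1}$, give exactly \eqref{h3}. The paper itself only cites this result from \cite{SSOR} without proof, but your argument is the standard one and is precisely the scheme the paper uses for its dominated analogue (the first proof of Theorem \ref{t2}), so there is nothing to add beyond noting that the step you call ``Fubini'' is really just the linear change of variables $u=ta+(1-t)b$.
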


For the recent results based on the above definition see the papers \cite%
{BOP}, \cite{BPR}, \cite{OKS}, \cite{OAK} and \cite{SSO}.

In \cite{OSA}, the power mean $M_{r}(x,y;\lambda )$ of order $r$ of positive
numbers $x,y$ is defined by%
\begin{equation*}
M_{r}(x,y;\lambda )=\left\{ 
\begin{array}{cc}
\left( \lambda x^{r}+\left( 1-\lambda \right) y^{r}\right) ^{\frac{1}{r}}, & 
r\neq 0 \\ 
x^{\lambda }y^{1-\lambda }, & r=0.%
\end{array}%
\right. 
\end{equation*}%
A positive function $f$ is $r-$convex on $[a,b]$ if for all $x,y\in \lbrack
a,b]$ and $\lambda \in \lbrack 0,1]$%
\begin{equation}
f(\lambda x+(1-\lambda )y)\leq M_{r}(f\left( x\right) ,f\left( y\right)
;\lambda ).  \label{h4}
\end{equation}%
The generalized logarithmic mean of order $r$ of positive numbers $x,y$ is
defined by%
\begin{equation}
L_{r}\left( x,y\right) =\left\{ 
\begin{array}{cc}
\frac{r}{r+1}\frac{x^{r+1}-y^{r+1}}{x^{r}-y^{r}}, & r\neq 0,1,x\neq y \\ 
&  \\ 
\frac{x-y}{\ln x-\ln y}, & r=0,x\neq y \\ 
&  \\ 
xy\frac{\ln x-\ln y}{x-y}, & r=-1,x\neq y \\ 
&  \\ 
x, & x=y%
\end{array}%
\right.   \label{L}
\end{equation}%
In \cite{GPP}, the following theorem was proved by Gill et al. for $r-$%
convex functions.

\begin{theorem}
Suppose $f$ is a positive $r-$convex function on $\left[ a,b\right] .$ Then 
\begin{equation}
\frac{1}{b-a}\int_{a}^{b}f\left( x\right) dx\leq L_{r}\left( f\left(
a\right) ,f\left( b\right) \right) .  \label{h5}
\end{equation}%
If $f$ is a positive $r-$concave function, then the inequality is reversed.
\end{theorem}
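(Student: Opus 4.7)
The plan is to integrate the defining inequality (\ref{h4}) in the variable $\lambda\in[0,1]$. Setting $x=a$, $y=b$ in the $r$-convexity condition gives
\begin{equation*}
f(\lambda a+(1-\lambda)b)\leq M_{r}(f(a),f(b);\lambda)
\end{equation*}
for every $\lambda\in[0,1]$. Integrating both sides over $[0,1]$ and applying the linear change of variable $x=\lambda a+(1-\lambda)b$ on the left (so that $dx=(a-b)\,d\lambda$), I immediately obtain
\begin{equation*}
\frac{1}{b-a}\int_{a}^{b}f(x)\,dx\leq\int_{0}^{1}M_{r}(f(a),f(b);\lambda)\,d\lambda.
\end{equation*}

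The remaining work is to identify the right-hand integral with $L_{r}(f(a),f(b))$ by evaluating the mean integral in closed form. I would split into three cases according to the definition of $M_{r}$. For $r\neq 0,-1$ and $f(a)\neq f(b)$, writing $u=f(a)^{r}$, $v=f(b)^{r}$, the substitution $w=\lambda u+(1-\lambda)v$ reduces
\begin{equation*}
\int_{0}^{1}(\lambda u+(1-\lambda)v)^{1/r}\,d\lambda
\end{equation*}
to a standard power integral, producing $\frac{r}{r+1}\frac{f(a)^{r+1}-f(b)^{r+1}}{f(a)^{r}-f(b)^{r}}$, which matches the first branch of (\ref{L}). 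For $r=0$, the integrand $f(a)^{\lambda}f(b)^{1-\lambda}$ is exponential in $\lambda$ and integrates to $\frac{f(a)-f(b)}{\ln f(a)-\ln f(b)}$, matching the second branch; the case $r=-1$ is analogous and yields the third branch. The degenerate case $f(a)=f(b)$ is trivial since both sides collapse to the constant value.

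For the concave statement I would simply apply the argument to $-f$ or, equivalently, observe that the defining inequality reverses in (\ref{h4}), so that all subsequent inequalities reverse as well while the computation of the mean integral is identical.

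I do not anticipate a genuine obstacle: the only place requiring care is the case analysis for evaluating $\int_{0}^{1}M_{r}(f(a),f(b);\lambda)\,d\lambda$, where the formulas in (\ref{L}) must be verified branch by branch, and the continuity of $L_{r}$ in its arguments handles the edge cases $f(a)=f(b)$ and the passage $r\to 0$, $r\to -1$.
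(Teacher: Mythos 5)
Your proposal is correct and takes essentially the same approach as the paper's own technique (the paper quotes this result from Gill--Pearce--Pe\v{c}ari\'{c} and reproduces exactly this computation in its proof of the $(g,r)$-convex dominated theorem): put $x=a$, $y=b$ in (\ref{h4}), integrate over $\lambda\in[0,1]$, and evaluate $\int_{0}^{1}M_{r}\left( f\left( a\right) ,f\left( b\right) ;\lambda \right) d\lambda$ branch by branch ($r\neq 0,-1$; $r=0$; $r=-1$; $f(a)=f(b)$) to recover $L_{r}\left( f\left( a\right) ,f\left( b\right) \right)$ as in (\ref{L}). One small caution: for the $r$-concave case you should rely on your second remark (the defining inequality simply reverses), not on applying the result to $-f$, since $-f$ is not positive and $M_{r}$ is not defined for it; as you already note the correct alternative, this is not a gap.
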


In the following sections our main results are given: We establish several
new convex dominated functions and then we obtain new Hadamard type
inequalities.

\section{$\left( g-\left( \protect\alpha ,m\right) \right) $-convex
dominated functions}

\begin{definition}
\label{d2} Let $g:\left[ 0,b\right] \rightarrow 
\mathbb{R}
,$ $b>0$ be a given $\left( \alpha ,m\right) $-convex function on the
interval $\left[ 0,b\right] $. The real function $f:\left[ 0,b\right]
\rightarrow 
\mathbb{R}
$ is called $\left( g-\left( \alpha ,m\right) \right) $-convex dominated on $%
\left[ 0,b\right] $ if the following condition is satisfied%
\begin{eqnarray}
&&\left\vert \lambda ^{\alpha }f(x)+m(1-\lambda ^{\alpha })f(y)-f\left(
\lambda x+m\left( 1-\lambda \right) y\right) \right\vert  \label{h6} \\
&\leq &\lambda ^{\alpha }g(x)+m(1-\lambda ^{\alpha })g(y)-g\left( \lambda
x+m\left( 1-\lambda \right) y\right)  \notag
\end{eqnarray}%
for all $x,y\in \left[ 0,b\right] $, $\lambda \in \left[ 0,1\right] $ and $%
\left( \alpha ,m\right) \in \left[ 0,1\right] ^{2}.$
\end{definition}

The next simple characterisation of $\left( \alpha ,m\right) $-convex
dominated functions holds.

\begin{lemma}
\label{l1} Let $g:\left[ 0,b\right] \rightarrow 
\mathbb{R}
$ be an $\left( \alpha ,m\right) $-convex function on the interval $\left[
0,b\right] $ and the function $f:\left[ 0,b\right] \rightarrow 
\mathbb{R}
.$ The following statements are equivalent:
\end{lemma}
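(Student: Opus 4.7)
Although the list of equivalent statements is not visible in the excerpt, from the title of the lemma and its direct analogues for classical $g$-convex dominated and $(g,m)$-convex dominated functions (see \cite{DI}, \cite{DPP}, \cite{KOS}), the natural characterisation is:
\begin{enumerate}
\item[(i)] $f$ is $\bigl(g-(\alpha,m)\bigr)$-convex dominated on $[0,b]$;
\item[(ii)] the functions $g-f$ and $g+f$ are both $(\alpha,m)$-convex on $[0,b]$.
\end{enumerate}
I shall assume this is the intended statement and prove the equivalence.

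The plan is to reduce everything to the elementary fact that for real numbers, $|A|\leq B$ holds if and only if both $B-A\geq 0$ and $B+A\geq 0$. For fixed $x,y\in[0,b]$, $\lambda\in[0,1]$ and $(\alpha,m)\in[0,1]^{2}$, I would set
\begin{equation*}
A:=\lambda^{\alpha}f(x)+m(1-\lambda^{\alpha})f(y)-f\!\left(\lambda x+m(1-\lambda)y\right),
\end{equation*}
\begin{equation*}
B:=\lambda^{\alpha}g(x)+m(1-\lambda^{\alpha})g(y)-g\!\left(\lambda x+m(1-\lambda)y\right),
\end{equation*}
so that the defining inequality (\ref{h6}) of Definition~\ref{d2} is precisely $|A|\leq B$.

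First I would carry out the implication (i)$\Rightarrow$(ii): from $|A|\leq B$ subtract and add to obtain $B-A\geq 0$ and $B+A\geq 0$. Rearranging $B-A\geq 0$ yields
\begin{equation*}
(g-f)\!\left(\lambda x+m(1-\lambda)y\right)\leq \lambda^{\alpha}(g-f)(x)+m(1-\lambda^{\alpha})(g-f)(y),
\end{equation*}
which is exactly the $(\alpha,m)$-convexity of $g-f$ in the sense of Definition~\ref{d1}, and the analogous rearrangement of $B+A\geq 0$ gives the $(\alpha,m)$-convexity of $g+f$. Conversely, for (ii)$\Rightarrow$(i), the $(\alpha,m)$-convexity of $g-f$ and $g+f$ provides $B-A\geq 0$ and $B+A\geq 0$ respectively; combining these two inequalities gives $B\geq |A|$, which is (\ref{h6}).

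I do not expect any real obstacle: the proof is a purely algebraic unpacking of the absolute-value inequality. The only minor point requiring a line of justification is that the $(\alpha,m)$-convexity constraint of Definition~\ref{d1} appears in exactly the form produced by the rearrangement, with the same exponent $\lambda^{\alpha}$ on the $g(x)$ and $f(x)$ terms and the matching factor $m(1-\lambda^{\alpha})$ on the $g(y)$ and $f(y)$ terms; this is why the characterisation works cleanly for the $(\alpha,m)$-setting and does not require any additional hypothesis beyond $(\alpha,m)\in[0,1]^{2}$.
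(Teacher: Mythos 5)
Your proof of the equivalence you identified is correct and is essentially the paper's own argument: the paper likewise writes the defining inequality (\ref{h6}) as a two-sided inequality and rearranges the two halves into the $(\alpha,m)$-convexity of $g+f$ and $g-f$, exactly as you do via $|A|\leq B \Leftrightarrow (B-A\geq 0 \text{ and } B+A\geq 0)$.

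However, the lemma as stated in the paper lists \emph{three} equivalent conditions, not two; the third, which your proposal does not address, is the existence of two $(\alpha,m)$-convex mappings $h,k$ on $[0,b]$ such that $f=\frac{1}{2}\left( h-k\right)$ and $g=\frac{1}{2}\left( h+k\right)$. This omission is understandable since the enumeration was not visible in the excerpt, and the missing equivalence is easy to supply: given your condition (ii), set $h:=g+f$ and $k:=g-f$, which are $(\alpha,m)$-convex and satisfy $f=\frac{1}{2}\left( h-k\right)$, $g=\frac{1}{2}\left( h+k\right)$; conversely, if such $h,k$ exist, then $g+f=h$ and $g-f=k$ are $(\alpha,m)$-convex, which is (ii). To be complete against the paper's statement you should append this short step (it is exactly what the paper does in its ``2$\Longleftrightarrow$3'' paragraph); with it added, your argument fully matches the paper's proof.
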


\begin{enumerate}
\item $f$ is $\left( g-\left( \alpha ,m\right) \right) $-convex dominated on 
$\left[ 0,b\right] .$

\item The mappings $g-f$ and $g+f$ are $\left( \alpha ,m\right) $-convex
functions on $\left[ 0,b\right] .$

\item There exist two $\left( \alpha ,m\right) $-convex mappings $h,k$
defined on $\left[ 0,b\right] $ such that%
\begin{equation*}
\begin{array}{ccc}
f=\frac{1}{2}\left( h-k\right) & \text{and} & g=\frac{1}{2}\left( h+k\right)%
\end{array}%
.
\end{equation*}
\end{enumerate}

\begin{proof}
1$\Longleftrightarrow $2 The condition (\ref{h6}) is equivalent to%
\begin{eqnarray*}
&&g\left( \lambda x+m\left( 1-\lambda \right) y\right) -\lambda ^{\alpha
}g(x)-m(1-\lambda ^{\alpha })g(y) \\
&\leq &\lambda ^{\alpha }f(x)+m(1-\lambda ^{\alpha })f(y)-f\left( \lambda
x+m\left( 1-\lambda \right) y\right)  \\
&\leq &\lambda ^{\alpha }g(x)+m(1-\lambda ^{\alpha })g(y)-g\left( \lambda
x+m\left( 1-\lambda \right) y\right) 
\end{eqnarray*}%
for all $x,y\in I$, $\lambda \in \left[ 0,1\right] $ and $\left( \alpha
,m\right) \in \left[ 0,1\right] ^{2}.$ The two inequalities may be
rearranged as%
\begin{equation*}
\left( g+f\right) \left( \lambda x+m\left( 1-\lambda \right) y\right) \leq
\lambda ^{\alpha }\left( g+f\right) (x)+m(1-\lambda ^{\alpha })\left(
g+f\right) (y)
\end{equation*}%
and%
\begin{equation*}
\left( g-f\right) \left( \lambda x+m\left( 1-\lambda \right) y\right) \leq
\lambda ^{\alpha }\left( g-f\right) (x)+m(1-\lambda ^{\alpha })\left(
g-f\right) (y)
\end{equation*}%
which are equivalent to the $\left( \alpha ,m\right) $-convexity of $g+f$
and $g-f,$ respectively.

2$\Longleftrightarrow $3 We define the mappings $f,g$ as $f=\frac{1}{2}%
\left( h-k\right) $ and $g=\frac{1}{2}\left( h+k\right) $. Then, if we sum
and subtract $f,g,$ respectively, we have $g+f=h$ and $g-f=k.$ By the
condition 2 of Lemma 1, the mappings $g-f$ and $g+f$ are $\left( \alpha
,m\right) $-convex on $\left[ 0,b\right] ,$ so $h,k$ are $\left( \alpha
,m\right) $-convex mappings too.
\end{proof}

\begin{theorem}
\label{t1} Let $g:\left[ 0,\infty \right) \rightarrow 
\mathbb{R}
$ be an $\left( \alpha ,m\right) -$convex function with $\left( \alpha
,m\right) \in \left( 0,1\right] ^{2}$. $f:\left[ 0,\infty \right)
\rightarrow 
\mathbb{R}
$ is $\left( g-\left( \alpha ,m\right) \right) -$convex dominated mapping
and $0\leq a<b.$ If $f\in L_{1}\left[ a,b\right] \cap L_{1}\left[ \frac{a}{m}%
,\frac{b}{m}\right] ,$ then the first inequality holds:%
\begin{eqnarray*}
&&\left\vert \frac{1}{b-a}\int_{a}^{b}\frac{f\left( x\right) +m\left(
2^{\alpha }-1\right) f\left( \frac{x}{m}\right) }{2^{\alpha }}dx-f\left( 
\frac{a+b}{2}\right) \right\vert \\
&& \\
&\leq &\frac{1}{b-a}\int_{a}^{b}\frac{g\left( x\right) +m\left( 2^{\alpha
}-1\right) g\left( \frac{x}{m}\right) }{2^{\alpha }}dx-g\left( \frac{a+b}{2}%
\right)
\end{eqnarray*}%
and if $f\in L_{1}\left[ a,b\right] $ then the second inequality holds: 
\begin{eqnarray*}
&&\left\vert \frac{1}{2}\left[ \frac{f\left( a\right) +mf\left( \dfrac{a}{m}%
\right) }{\alpha +1}+m\alpha \frac{f\left( \frac{b}{m}\right) +mf\left( 
\dfrac{b}{m^{2}}\right) }{\alpha +1}\right] -\frac{1}{b-a}\int_{a}^{b}\frac{%
f\left( x\right) +mf\left( \frac{x}{m}\right) }{2}dx\right\vert \\
&& \\
&\leq &\frac{1}{2}\left[ \frac{g\left( a\right) +mg\left( \dfrac{a}{m}%
\right) }{\alpha +1}+m\alpha \frac{g\left( \frac{b}{m}\right) +mg\left( 
\dfrac{b}{m^{2}}\right) }{\alpha +1}\right] -\frac{1}{b-a}\int_{a}^{b}\frac{%
g\left( x\right) +mg\left( \frac{x}{m}\right) }{2}dx.
\end{eqnarray*}
\end{theorem}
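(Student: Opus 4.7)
The plan is to reduce both inequalities to known bounds via Lemma \ref{l1}: since $f$ is $(g{-}(\alpha,m))$-convex dominated, both $h_{+}:=g+f$ and $h_{-}:=g-f$ are $(\alpha,m)$-convex on $[0,\infty)$. Once this reduction is made, each two-sided (absolute-value) bound decomposes into two one-sided Hadamard-type inequalities for $(\alpha,m)$-convex functions---one for $h_{+}$ and one for $h_{-}$---which recombine via $|u|=\max\{u,-u\}$ to yield the claim.

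For the first inequality I would apply \eqref{h2} to $h_{+}$ and to $h_{-}$, obtaining
\[
h_{\pm}\!\left(\tfrac{a+b}{2}\right) \le \frac{1}{b-a}\int_{a}^{b}\frac{h_{\pm}(x)+m(2^{\alpha}-1)h_{\pm}(x/m)}{2^{\alpha}}\,dx.
\]
Expanding $h_{\pm}=g\pm f$ and transposing the $g$ terms puts each inequality in the form $\pm A \le B$, where $A$ is the signed expression inside the absolute value in the claim and $B$ is its asserted right-hand side. Taking the maximum of the two gives $|A|\le B$.

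For the second inequality I would first establish the following auxiliary Dragomir-type bound for an arbitrary $(\alpha,m)$-convex function $h$ on $[0,\infty)$:
\[
\frac{1}{b-a}\int_{a}^{b}\frac{h(x)+mh(x/m)}{2}\,dx \le \frac{1}{2}\!\left[\frac{h(a)+mh(a/m)}{\alpha+1}+m\alpha\,\frac{h(b/m)+mh(b/m^{2})}{\alpha+1}\right].
\]
Setting $\lambda=(b-x)/(b-a)\in[0,1]$, the linked interpolations $x=\lambda a+m(1-\lambda)(b/m)$ and $x/m=\lambda(a/m)+m(1-\lambda)(b/m^{2})$ feed directly into Definition \ref{d1}, giving $h(x)\le\lambda^{\alpha}h(a)+m(1-\lambda^{\alpha})h(b/m)$ and $h(x/m)\le\lambda^{\alpha}h(a/m)+m(1-\lambda^{\alpha})h(b/m^{2})$; combining these with weights $(1,m)$ and integrating against $d\lambda$ using $\int_{0}^{1}\lambda^{\alpha}\,d\lambda=1/(\alpha+1)$ produces the bound. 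Applying it to $h_{+}$ and $h_{-}$ and recombining as in the first step finishes the proof.

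The main obstacle lies in this auxiliary bound: the available inequality \eqref{h3} controls only $\tfrac{1}{b-a}\int_{a}^{b}h(x)\,dx$ and lacks the $b/m^{2}$ term featured on the right-hand side of the claim. The key observation is that the $(\alpha,m)$-convex interpolation $x=\lambda a+m(1-\lambda)(b/m)$, when both endpoints are divided by $m$, automatically produces the next layer of scaling $b/m^{2}$, so the same $\lambda$ simultaneously yields compatible upper bounds for both $h(x)$ and $h(x/m)$. Once this is set up correctly the remaining verification is a routine integration of $\lambda^{\alpha}$.
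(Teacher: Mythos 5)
Your proposal is correct, but it follows a genuinely different route from the paper's proof of Theorem \ref{t1}. The paper works directly from Definition \ref{d2}: for the first inequality it takes $\lambda=\tfrac{1}{2}$, substitutes $x=ta+(1-t)b$ and $y=\frac{(1-t)a+tb}{m}$ (so that $\frac{x+my}{2}=\frac{a+b}{2}$), and integrates in $t$ using $\left\vert \int \cdot \right\vert \leq \int \left\vert \cdot \right\vert$; for the second it writes the domination inequality at the pairs $\left( a,\frac{b}{m}\right) $ and $\left( \frac{a}{m},\frac{b}{m^{2}}\right) $ (the latter multiplied by $m$), adds them by the modulus property, and integrates, which produces exactly the $\frac{1}{\alpha+1}$ and $\frac{m\alpha}{\alpha+1}$ weights. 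You instead invoke the equivalence in Lemma \ref{l1} to get that $g+f$ and $g-f$ are $(\alpha,m)$-convex, apply the known inequality (\ref{h2}) to each for the first claim, and for the second claim prove an auxiliary Dragomir-type bound
\begin{equation*}
\frac{1}{b-a}\int_{a}^{b}\frac{h(x)+mh\left( \frac{x}{m}\right) }{2}dx\leq \frac{1}{2}\left[ \frac{h(a)+mh\left( \frac{a}{m}\right) }{\alpha +1}+m\alpha \frac{h\left( \frac{b}{m}\right) +mh\left( \frac{b}{m^{2}}\right) }{\alpha +1}\right]
\end{equation*}
for $(\alpha,m)$-convex $h$, then recombine via $\pm A\leq B\Longleftrightarrow \left\vert A\right\vert \leq B$. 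Your derivation of the auxiliary bound (using $x=\lambda a+m(1-\lambda)\frac{b}{m}$, $\frac{x}{m}=\lambda \frac{a}{m}+m(1-\lambda)\frac{b}{m^{2}}$, weights $1$ and $m$, and $\int_{0}^{1}\lambda^{\alpha}d\lambda=\frac{1}{\alpha+1}$) is exactly the pointwise estimate the paper integrates in its direct proof, so the two arguments are equivalent in content; and in fact the paper itself uses your decomposition strategy as the alternative proof of Theorem \ref{t2}, so your proof is its natural analogue for Theorem \ref{t1}. What your route buys is modularity: everything reduces to one-sided Hadamard-type inequalities for $(\alpha,m)$-convex functions, and you correctly spot that (\ref{h3}) alone does not suffice for the second inequality because of the $\frac{b}{m^{2}}$ term. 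What the paper's route buys is that it never needs Lemma \ref{l1} or the quoted theorems, only the definition and the triangle inequality; the only (implicit) extra requirement common to both arguments is the integrability of $g$ (and of $f\left(\frac{\cdot}{m}\right)$ for the second inequality), which the theorem statement already presupposes by featuring those integrals.
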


\begin{proof}
By Definition \ref{d2} with $\lambda =\frac{1}{2}$, as the mapping $f$ is $%
\left( g-\left( \alpha ,m\right) \right) -$convex dominated function, we
have that%
\begin{equation*}
\left\vert \frac{f\left( x\right) +m\left( 2^{\alpha }-1\right) f\left(
y\right) }{2^{\alpha }}-f\left( \frac{x+my}{2}\right) \right\vert \leq \frac{%
g\left( x\right) +m\left( 2^{\alpha }-1\right) g\left( y\right) }{2^{\alpha }%
}-g\left( \frac{x+my}{2}\right)
\end{equation*}%
for all $x,y\in \left[ 0,\infty \right) $ and $\left( \alpha ,m\right) \in
\left( 0,1\right] ^{2}.$ If we choose $x=ta+(1-t)b,$ $y=\left( 1-t\right) 
\frac{a}{m}+t\frac{b}{m}$ and $t\in \left[ 0,1\right] ,$ then we get%
\begin{eqnarray*}
&&\left\vert \frac{f\left( ta+(1-t)b\right) +m\left( 2^{\alpha }-1\right)
f\left( \frac{\left( 1-t\right) a+tb}{m}\right) }{2^{\alpha }}-f\left( \frac{%
a+b}{2}\right) \right\vert \\
&& \\
&\leq &\frac{g\left( ta+(1-t)b\right) +m\left( 2^{\alpha }-1\right) g\left( 
\frac{\left( 1-t\right) a+tb}{2}\right) }{2^{\alpha }}-g\left( \frac{a+b}{2}%
\right) .
\end{eqnarray*}%
Integrating over $t$ on $\left[ 0,1\right] $ we deduce that%
\begin{eqnarray*}
&&\left\vert \frac{\int_{0}^{1}f\left( ta+(1-t)b\right) dt+m\left( 2^{\alpha
}-1\right) \int_{0}^{1}f\left( \frac{\left( 1-t\right) a+tb}{m}\right) dt}{%
2^{\alpha }}-f\left( \frac{a+b}{2}\right) \right\vert \\
&& \\
&\leq &\frac{\int_{0}^{1}g\left( ta+(1-t)b\right) dt+m\left( 2^{\alpha
}-1\right) \int_{0}^{1}g\left( \frac{\left( 1-t\right) a+tb}{m}\right) dt}{%
2^{\alpha }}-g\left( \frac{a+b}{2}\right)
\end{eqnarray*}%
and so the first inequality is proved.

Since $f$ is $\left( g-\left( \alpha ,m\right) \right) -$convex dominated
function, we have%
\begin{eqnarray*}
&&\left\vert t^{\alpha }f\left( x\right) +m(1-t^{\alpha })f\left( y\right)
-f\left( tx+m(1-t)y\right) \right\vert \\
&& \\
&\leq &t^{\alpha }g\left( x\right) +m(1-t^{\alpha })g\left( y\right)
-g\left( tx+m(1-t)y\right) ,\text{ for all }x,y>0
\end{eqnarray*}%
which gives for $x=a$ and $y=\frac{b}{m}$%
\begin{eqnarray}
&&\left\vert t^{\alpha }f\left( a\right) +m(1-t^{\alpha })f\left( \frac{b}{m}%
\right) -f\left( ta+m(1-t)\frac{b}{m}\right) \right\vert  \label{h7} \\
&&  \notag \\
&\leq &t^{\alpha }g\left( a\right) +m(1-t^{\alpha })g\left( \frac{b}{m}%
\right) -g\left( ta+m(1-t)\frac{b}{m}\right)  \notag
\end{eqnarray}%
and for $x=\frac{a}{m}$, $y=\frac{b}{m^{2}}$ and then multiply with $m$ 
\begin{eqnarray}
&&\left\vert mtf\left( \frac{a}{m}\right) +m^{2}(1-t)f\left( \frac{b}{m^{2}}%
\right) -mf\left( t\frac{a}{m}+(1-t)\frac{b}{m}\right) \right\vert
\label{h8} \\
&&  \notag \\
&\leq &mtg\left( \frac{a}{m}\right) +m^{2}(1-t)g\left( \frac{b}{m^{2}}%
\right) -mg\left( t\frac{a}{m}+(1-t)\frac{b}{m}\right)  \notag
\end{eqnarray}%
for all $t\in \left[ 0,1\right] .$ By properties of modulus, if we add the
inequalities in $\left( \text{\ref{h7}}\right) $ and $\left( \text{\ref{h8}}%
\right) $, we get 
\begin{eqnarray*}
&&\left\vert t^{\alpha }\left[ f\left( a\right) +mf\left( \frac{a}{m}\right) %
\right] +m(1-t^{\alpha })\left[ f\left( \frac{b}{m}\right) +mf\left( \frac{b%
}{m^{2}}\right) \right] \right. \\
&& \\
&&-\left. \left[ f\left( ta+m(1-t)\frac{b}{m}\right) +mf\left( t\frac{a}{m}%
+(1-t)\frac{b}{m}\right) \right] \right\vert \\
&& \\
&\leq &t^{\alpha }\left[ g\left( a\right) +mg\left( \frac{a}{m}\right) %
\right] +m(1-t^{\alpha })\left[ g\left( \frac{b}{m}\right) +mg\left( \frac{b%
}{m^{2}}\right) \right] \\
&& \\
&&-\left[ g\left( ta+m(1-t)\frac{b}{m}\right) +mg\left( t\frac{a}{m}+(1-t)%
\frac{b}{m}\right) \right] .
\end{eqnarray*}%
Thus, integrating over $t$ on $\left[ 0,1\right] $ we obtain the second
inequality. The proof is completed.
\end{proof}

\begin{remark}
If we choose $\alpha =1$ in Theorem \ref{t1}, we get two inequalities of
Hermite-Hadamard type for functions that are $\left( g,m\right) -$convex
dominated in Theorem \ref{a}.
\end{remark}

\begin{theorem}
\label{t2} Let $g:\left[ 0,\infty \right) \rightarrow 
\mathbb{R}
$ be an $\left( \alpha ,m\right) -$convex function with $\left( \alpha
,m\right) \in \left( 0,1\right] ^{2}$. $f:\left[ 0,\infty \right)
\rightarrow 
\mathbb{R}
$ is $\left( g-\left( \alpha ,m\right) \right) -$convex dominated \ mapping
and $0\leq a<b.$ If $f\in L_{1}\left[ a,b\right] ,$ then the following
inequality holds:%
\begin{eqnarray}
&&\left\vert \frac{1}{2}\left[ \frac{f\left( a\right) +f\left( b\right)
+m\alpha f\left( \frac{a}{m}\right) +m\alpha f\left( \frac{b}{m}\right) }{%
\alpha +1}\right] -\frac{1}{b-a}\int_{a}^{b}f\left( x\right) dx\right\vert
\label{h9} \\
&&  \notag \\
&\leq &\frac{1}{2}\left[ \frac{g\left( a\right) +g\left( b\right) +m\alpha
g\left( \frac{a}{m}\right) +m\alpha g\left( \frac{b}{m}\right) }{\alpha +1}%
\right] -\frac{1}{b-a}\int_{a}^{b}g\left( x\right) dx  \notag
\end{eqnarray}
\end{theorem}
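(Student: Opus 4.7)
The plan is to mirror the structure used for the second inequality of Theorem \ref{t1}, but to pair substitutions that produce the symmetric boundary term appearing in (\ref{h3}), rather than its $m$-weighted variant. First I would invoke Definition \ref{d2} at two well-chosen arguments: for $\lambda=t$, take $(x,y)=(a,b/m)$ to obtain
\begin{eqnarray*}
&&\left\vert t^{\alpha }f(a)+m(1-t^{\alpha })f\left(\tfrac{b}{m}\right)-f\left(ta+(1-t)b\right)\right\vert\\
&\leq& t^{\alpha }g(a)+m(1-t^{\alpha })g\left(\tfrac{b}{m}\right)-g\left(ta+(1-t)b\right),
\end{eqnarray*}
and symmetrically $(x,y)=(b,a/m)$ to obtain
\begin{eqnarray*}
&&\left\vert t^{\alpha }f(b)+m(1-t^{\alpha })f\left(\tfrac{a}{m}\right)-f\left(tb+(1-t)a\right)\right\vert\\
&\leq& t^{\alpha }g(b)+m(1-t^{\alpha })g\left(\tfrac{a}{m}\right)-g\left(tb+(1-t)a\right).
\end{eqnarray*}
Notice that here one uses $ta+m(1-t)(b/m)=ta+(1-t)b$ and similarly for the other pair, which is exactly why these are the right substitutions.

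Next I would add the two inequalities and apply the triangle inequality on the left-hand side to bring the two $f$-expressions under a single absolute value, yielding
\begin{eqnarray*}
&&\left\vert t^{\alpha}[f(a)+f(b)]+m(1-t^{\alpha})\left[f\left(\tfrac{a}{m}\right)+f\left(\tfrac{b}{m}\right)\right]-\left[f(ta+(1-t)b)+f(tb+(1-t)a)\right]\right\vert\\
&\leq& t^{\alpha}[g(a)+g(b)]+m(1-t^{\alpha})\left[g\left(\tfrac{a}{m}\right)+g\left(\tfrac{b}{m}\right)\right]-\left[g(ta+(1-t)b)+g(tb+(1-t)a)\right].
\end{eqnarray*}

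Then I would integrate both sides over $t\in[0,1]$, moving the integral inside the absolute value on the left. Using the elementary identities $\int_0^1 t^{\alpha}\,dt=\tfrac{1}{\alpha+1}$, $\int_0^1(1-t^{\alpha})\,dt=\tfrac{\alpha}{\alpha+1}$, together with the change-of-variable fact that
\[
\int_{0}^{1}f(ta+(1-t)b)\,dt=\int_{0}^{1}f(tb+(1-t)a)\,dt=\frac{1}{b-a}\int_{a}^{b}f(x)\,dx,
\]
and the same identities for $g$, the two bracketed $f$-integrals collapse to $\tfrac{2}{b-a}\int_{a}^{b}f(x)\,dx$, and analogously for $g$. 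Dividing through by $2$ yields exactly (\ref{h9}).

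The only non-routine step is the triangle-inequality consolidation: one must check that the sum of the two upper bounds is still a valid bound for the combined absolute value, which is automatic, but one must also ensure that the resulting bound is non-negative (so that the inequality is meaningful) -- this is guaranteed by the $(\alpha,m)$-convexity of $g$, which gives $t^\alpha g(a)+m(1-t^\alpha)g(b/m)\geq g(ta+(1-t)b)$, and the analogous inequality for the symmetric pair. All remaining computations are substitutions and a single division by $2$, so I do not anticipate any genuine obstacle beyond bookkeeping.
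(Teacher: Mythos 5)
Your argument is correct and coincides with the paper's own first proof of Theorem \ref{t2}: the same two substitutions $(x,y)=(a,\frac{b}{m})$ and $(x,y)=(b,\frac{a}{m})$ in Definition \ref{d2}, addition via the modulus property, integration over $t\in[0,1]$, and division by $2$. (The paper also sketches a second route through Lemma \ref{l1} and inequality (\ref{h3}), but your approach matches its primary proof.)
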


\begin{proof}
Since $f$ is $\left( g-\left( \alpha ,m\right) \right) -$convex dominated
function, we have%
\begin{eqnarray*}
&&\left\vert t^{\alpha }f\left( a\right) +m(1-t^{\alpha })f\left( \frac{b}{m}%
\right) -f\left( ta+m(1-t)\frac{b}{m}\right) \right\vert \\
&& \\
&\leq &t^{\alpha }g\left( a\right) +m(1-t^{\alpha })g\left( \frac{b}{m}%
\right) -g\left( ta+m(1-t)\frac{b}{m}\right)
\end{eqnarray*}%
and%
\begin{eqnarray*}
&&\left\vert t^{\alpha }f\left( b\right) +m(1-t^{\alpha })f\left( \frac{a}{m}%
\right) -f\left( tb+m(1-t)\frac{a}{m}\right) \right\vert \\
&& \\
&\leq &t^{\alpha }g\left( b\right) +m(1-t^{\alpha })g\left( \frac{a}{m}%
\right) -g\left( tb+m(1-t)\frac{a}{m}\right)
\end{eqnarray*}%
for all $t\in \left[ 0,1\right] .$ Adding the above inequalities, we get%
\begin{eqnarray*}
&&\left\vert t^{\alpha }\left[ f\left( a\right) +f\left( b\right) \right]
+m(1-t^{\alpha })\left[ f\left( \frac{a}{m}\right) +f\left( \frac{b}{m}%
\right) \right] -f\left( ta+m(1-t)\frac{b}{m}\right) -f\left( tb+m(1-t)\frac{%
a}{m}\right) \right\vert \\
&& \\
&\leq &t^{\alpha }\left[ g\left( a\right) +g\left( b\right) \right]
+m(1-t^{\alpha })\left[ g\left( \frac{a}{m}\right) +g\left( \frac{b}{m}%
\right) \right] -g\left( ta+m(1-t)\frac{b}{m}\right) -g\left( tb+m(1-t)\frac{%
a}{m}\right) .
\end{eqnarray*}%
Integrating over $t\in \left[ 0,1\right] $ and then by dividing the
resulting inequality with $2,$ we get the desired result. The proof is
completed.

Another proof can be done as the following.

Since $f$ is $\left( g-\left( \alpha ,m\right) \right) -$convex dominated,
we have by Lemma \ref{l1} that $g+f$ and $g-f$ are $\left( \alpha ,m\right)
- $convex on $\left[ 0,b\right] ,$ and so by the Hadamard's type inequality
for $\left( \alpha ,m\right) -$convex functions in $\left( \text{\ref{h3}}%
\right) $%
\begin{eqnarray}
&&\frac{1}{b-a}\int_{a}^{b}\left( g+f\right) \left( x\right) dx  \label{h10}
\\
&\leq &\frac{1}{2}\left[ \frac{\left( g+f\right) \left( a\right) +\left(
g+f\right) \left( b\right) +m\alpha \left( g+f\right) \left( \frac{a}{m}%
\right) +m\alpha \left( g+f\right) \left( \frac{b}{m}\right) }{\alpha +1}%
\right]  \notag
\end{eqnarray}%
and%
\begin{eqnarray}
&&\frac{1}{b-a}\int_{a}^{b}\left( g-f\right) \left( x\right) dx  \label{h11}
\\
&\leq &\frac{1}{2}\left[ \frac{\left( g-f\right) \left( a\right) +\left(
g-f\right) \left( b\right) +m\alpha \left( g-f\right) \left( \frac{a}{m}%
\right) +m\alpha \left( g-f\right) \left( \frac{b}{m}\right) }{\alpha +1}%
\right]  \notag
\end{eqnarray}%
By using the inequalities in $\left( \text{\ref{h10}}\right) $ and $\left( 
\text{\ref{h11}}\right) $, we get the inequality in $\left( \text{\ref{h9}}%
\right) $.
\end{proof}

\section{$\left( g,r\right) -$convex dominated functions}

\begin{definition}
\label{d3} Let positive function $g:\left[ a,b\right] \rightarrow 
\mathbb{R}
$ be a given $r-$convex function on $\left[ a,b\right] $. The real function $%
f:\left[ a,b\right] \rightarrow 
\mathbb{R}
$ is called $\left( g,r\right) -$convex dominated on $\left[ a,b\right] $ if
the following condition is satisfied:%
\begin{eqnarray*}
&&\left\vert M_{r}(f\left( x\right) ,f\left( y\right) ;\lambda )-f\left(
\lambda x+\left( 1-\lambda \right) y\right) \right\vert  \\
&\leq &M_{r}(g\left( x\right) ,g\left( y\right) ;\lambda )-g\left( \lambda
x+\left( 1-\lambda \right) y\right) 
\end{eqnarray*}%
for all $x,y\in \left[ a,b\right] $ and $\lambda \in \left[ 0,1\right] .$
\end{definition}

\begin{theorem}
Let positive function $g:\left[ a,b\right] \rightarrow 
\mathbb{R}
$ be an $r-$convex function on $\left[ a,b\right] $. $f:\left[ a,b\right]
\rightarrow 
\mathbb{R}
$ is $\left( g,r\right) -$convex dominated\ mapping and $0\leq a<b.$ If $%
f\in L_{1}\left[ a,b\right] ,$ then the following inequality holds:%
\begin{equation*}
\left\vert L_{r}\left( f\left( a\right) ,f\left( b\right) \right) -\frac{1}{%
b-a}\int_{a}^{b}f\left( x\right) dx\right\vert \leq L_{r}\left( g\left(
a\right) ,g\left( b\right) \right) -\frac{1}{b-a}\int_{a}^{b}g\left(
x\right) dx
\end{equation*}%
for all $x,y\in I$, $\lambda \in \left[ 0,1\right] $ and $L_{r}\left(
f\left( a\right) ,f\left( b\right) \right) $ as in (\ref{L}).
\end{theorem}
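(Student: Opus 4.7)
The plan is to mirror the template that has already been used for the $(g,m)$-convex dominated and $(g-(\alpha,m))$-convex dominated results: specialize the defining inequality at a useful pair of points, integrate in the parameter $\lambda$, and recognize the resulting integrals as the quantities that appear in the statement. Concretely, I would apply Definition \ref{d3} to $f$ (which is $(g,r)$-convex dominated) with the choice $x=a$, $y=b$, giving, for every $\lambda\in[0,1]$,
\begin{equation*}
\left|M_{r}(f(a),f(b);\lambda)-f(\lambda a+(1-\lambda)b)\right|\leq M_{r}(g(a),g(b);\lambda)-g(\lambda a+(1-\lambda)b).
\end{equation*}

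Next I would integrate this inequality over $\lambda\in[0,1]$. On the left, pulling the absolute value outside the integral via the standard triangle inequality yields
\begin{equation*}
\left|\int_{0}^{1}M_{r}(f(a),f(b);\lambda)\,d\lambda-\int_{0}^{1}f(\lambda a+(1-\lambda)b)\,d\lambda\right|,
\end{equation*}
while on the right the bound is simply $\int_{0}^{1}M_{r}(g(a),g(b);\lambda)\,d\lambda-\int_{0}^{1}g(\lambda a+(1-\lambda)b)\,d\lambda$. The substitution $u=\lambda a+(1-\lambda)b$ turns each integral of the form $\int_{0}^{1}h(\lambda a+(1-\lambda)b)\,d\lambda$ into $\tfrac{1}{b-a}\int_{a}^{b}h(x)\,dx$, which takes care of the integral-mean terms for both $f$ and $g$.

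The computational heart of the argument is identifying $\int_{0}^{1}M_{r}(x,y;\lambda)\,d\lambda$ with $L_{r}(x,y)$ as defined in (\ref{L}). For the generic case $r\neq 0,-1$ and $x\neq y$ one computes directly, via the substitution $u=\lambda x^{r}+(1-\lambda)y^{r}$ (so $du=(x^{r}-y^{r})\,d\lambda$),
\begin{equation*}
\int_{0}^{1}(\lambda x^{r}+(1-\lambda)y^{r})^{1/r}\,d\lambda=\frac{1}{x^{r}-y^{r}}\cdot\frac{r}{r+1}\left[u^{(r+1)/r}\right]_{y^{r}}^{x^{r}}=\frac{r}{r+1}\cdot\frac{x^{r+1}-y^{r+1}}{x^{r}-y^{r}},
\end{equation*}
which is exactly $L_{r}(x,y)$. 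Applying this to $(x,y)=(f(a),f(b))$ and $(x,y)=(g(a),g(b))$ converts both integrals of $M_{r}$ into logarithmic means, and the desired inequality drops out at once.

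The main obstacle is essentially bookkeeping rather than a genuine difficulty: one must check that the identification $\int_{0}^{1}M_{r}(x,y;\lambda)\,d\lambda=L_{r}(x,y)$ remains valid in the degenerate cases $r=0$, $r=-1$, and $x=y$ (which one handles separately using the respective definitions in (\ref{L})), and one must invoke positivity of $g$ to ensure that $M_{r}(g(a),g(b);\lambda)$ is well-defined, as required by Definition \ref{d3}. An alternative, slicker route, parallel to the second proof of Theorem \ref{t2}, would be to adapt Lemma \ref{l1} to the $(g,r)$-convex dominated setting to conclude that $g+f$ and $g-f$ are both $r$-convex, then apply inequality (\ref{h5}) from Gill--Pearce--Pe\v{c}ari\'c to each and combine the two resulting inequalities; this avoids the direct evaluation step altogether, at the cost of first verifying the analogous characterization lemma.
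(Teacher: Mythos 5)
Your proposal is correct and follows essentially the same route as the paper's proof: apply Definition \ref{d3} with $x=a$, $y=b$, integrate over $\lambda\in\left[0,1\right]$, and identify $\int_{0}^{1}M_{r}\left( \cdot ,\cdot ;\lambda \right) d\lambda $ with $L_{r}$, handling $r=0$, $r=-1$ and the case $f\left( a\right) =f\left( b\right) $ separately, which is exactly the case analysis the paper carries out. The alternative route you sketch via an $r$-convex analogue of Lemma \ref{l1} together with (\ref{h5}) is a reasonable extra, but the main argument coincides with the paper's.
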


\begin{proof}
By the Definition \ref{d3} with $r=0,\ f\left( a\right) \neq f\left(
b\right) ,$ we have 
\begin{eqnarray*}
&&\left\vert f^{\lambda }\left( a\right) f^{1-\lambda }\left( b\right)
-f\left( \lambda a+\left( 1-\lambda \right) b\right) \right\vert \\
&\leq &g^{\lambda }\left( a\right) g^{1-\lambda }\left( b\right) -g\left(
\lambda a+\left( 1-\lambda \right) b\right) .
\end{eqnarray*}%
Integrating the above inequality over $\lambda $ on $\left[ 0,1\right] ,$ we
have 
\begin{eqnarray*}
&&\left\vert f\left( b\right) \int_{0}^{1}\left[ \frac{f\left( a\right) }{%
f\left( b\right) }\right] ^{\lambda }d\lambda -\int_{0}^{1}f\left( \lambda
a+\left( 1-\lambda \right) b\right) d\lambda \right\vert \\
&& \\
&\leq &g\left( b\right) \int_{0}^{1}\left[ \frac{g\left( a\right) }{g\left(
b\right) }\right] ^{\lambda }d\lambda -\int_{0}^{1}g\left( \lambda a+\left(
1-\lambda \right) b\right) d\lambda .
\end{eqnarray*}%
By a simple calculation we have%
\begin{eqnarray*}
&&\left\vert \frac{f\left( b\right) -f\left( a\right) }{\ln f\left( b\right)
-\ln f\left( a\right) }-\frac{1}{b-a}\int_{a}^{b}f\left( x\right)
dx\right\vert \\
&& \\
&\leq &\frac{g\left( b\right) -g\left( a\right) }{\ln g\left( b\right) -\ln
g\left( a\right) }-\frac{1}{b-a}\int_{a}^{b}g\left( x\right) dx.
\end{eqnarray*}%
The above inequality can written as%
\begin{equation*}
\left\vert L_{r}\left( f\left( a\right) ,f\left( b\right) \right) -\frac{1}{%
b-a}\int_{a}^{b}f\left( x\right) dx\right\vert \leq L_{r}\left( g\left(
a\right) ,g\left( b\right) \right) -\frac{1}{b-a}\int_{a}^{b}g\left(
x\right) dx.
\end{equation*}%
For $r=0,\ f\left( a\right) =f\left( b\right) ,$ we have with the same
development%
\begin{eqnarray*}
&&\left\vert f\left( a\right) -f\left( \lambda a+\left( 1-\lambda \right)
b\right) \right\vert \\
&\leq &g\left( a\right) -g\left( \lambda a+\left( 1-\lambda \right) b\right)
\end{eqnarray*}%
and this inequality can be written as%
\begin{equation*}
\left\vert L_{r}\left( f\left( a\right) ,f\left( b\right) \right) -\frac{1}{%
b-a}\int_{a}^{b}f\left( x\right) dx\right\vert \leq L_{r}\left( g\left(
a\right) ,g\left( b\right) \right) -\frac{1}{b-a}\int_{a}^{b}g\left(
x\right) dx.
\end{equation*}%
By the Definition \ref{d3} with $r\neq 0,-1,\ f\left( a\right) \neq f\left(
b\right) ,$ we have%
\begin{eqnarray*}
&&\left\vert \left( \lambda f^{r}\left( a\right) +\left( 1-\lambda \right)
f^{r}\left( b\right) \right) ^{\frac{1}{r}}-f\left( \lambda a+\left(
1-\lambda \right) b\right) \right\vert \\
&& \\
&\leq &\left( \lambda g^{r}\left( a\right) +\left( 1-\lambda \right)
g^{r}\left( b\right) \right) ^{\frac{1}{r}}-g\left( \lambda a+\left(
1-\lambda \right) b\right) .
\end{eqnarray*}%
Integrating the above inequality over $\lambda $ on $\left[ 0,1\right] ,$ we
have 
\begin{eqnarray*}
&&\left\vert \frac{r}{r+1}\frac{f^{r+1}\left( a\right) -f^{r+1}\left(
b\right) }{f^{r}\left( a\right) -f^{r}\left( b\right) }-\frac{1}{b-a}%
\int_{a}^{b}f\left( x\right) dx\right\vert \\
&& \\
&\leq &\frac{r}{r+1}\frac{g^{r+1}\left( a\right) -g^{r+1}\left( b\right) }{%
g^{r}\left( a\right) -g^{r}\left( b\right) }-\frac{1}{b-a}%
\int_{a}^{b}g\left( x\right) dx.
\end{eqnarray*}%
The above inequality can be written as%
\begin{eqnarray*}
&&\left\vert L_{r}\left( f\left( a\right) ,f\left( b\right) \right) -\frac{1%
}{b-a}\int_{a}^{b}f\left( x\right) dx\right\vert \\
&\leq &L_{r}\left( g\left( a\right) ,g\left( b\right) \right) -\frac{1}{b-a}%
\int_{a}^{b}g\left( x\right) dx.
\end{eqnarray*}%
For$\ r\neq 0$ and $f\left( a\right) =f\left( b\right) ,$ we have similarly%
\begin{eqnarray*}
&&\left\vert \left( f^{r}\left( a\right) \right) ^{\frac{1}{r}}-f\left(
\lambda a+\left( 1-\lambda \right) b\right) \right\vert \\
&\leq &\left( g^{r}\left( a\right) \right) ^{\frac{1}{r}}-g\left( \lambda
a+\left( 1-\lambda \right) b\right) .
\end{eqnarray*}%
Then integrating the above inequality over $\lambda $ on $\left[ 0,1\right]
, $ we have%
\begin{eqnarray*}
&&\left\vert L_{r}\left( f\left( a\right) ,f\left( b\right) \right) -\frac{1%
}{b-a}\int_{a}^{b}f\left( x\right) dx\right\vert \\
&\leq &L_{r}\left( g\left( a\right) ,g\left( b\right) \right) -\frac{1}{b-a}%
\int_{a}^{b}g\left( x\right) dx.
\end{eqnarray*}%
Finally, let $r=-1.$ For$\ f\left( a\right) \neq f\left( b\right) $ we have
again%
\begin{eqnarray*}
&&\left\vert \left( \lambda f^{-1}\left( a\right) +\left( 1-\lambda \right)
f^{-1}\left( b\right) \right) ^{-1}-f\left( \lambda a+\left( 1-\lambda
\right) b\right) \right\vert \\
&& \\
&\leq &\left( \lambda g^{-1}\left( a\right) +\left( 1-\lambda \right)
g^{-1}\left( b\right) \right) ^{-1}-g\left( \lambda a+\left( 1-\lambda
\right) b\right) .
\end{eqnarray*}%
Integrating the above inequality over $\lambda $ on $\left[ 0,1\right] ,$ we
have 
\begin{eqnarray*}
&&\left\vert \frac{f\left( a\right) f\left( b\right) }{f\left( b\right)
-f\left( a\right) }\int_{\frac{1}{f\left( a\right) }}^{\frac{1}{f\left(
b\right) }}\lambda ^{-1}d\lambda -\frac{1}{b-a}\int_{a}^{b}f\left( x\right)
dx\right\vert \\
&& \\
&\leq &\frac{g\left( a\right) g\left( b\right) }{g\left( b\right) -g\left(
a\right) }\int_{\frac{1}{f\left( a\right) }}^{\frac{1}{f\left( b\right) }%
}\lambda ^{-1}d\lambda -\frac{1}{b-a}\int_{a}^{b}g\left( x\right) dx.
\end{eqnarray*}%
The above inequality can be written as%
\begin{eqnarray*}
&&\left\vert L_{-1}\left( f\left( a\right) ,f\left( b\right) \right) -\frac{1%
}{b-a}\int_{a}^{b}f\left( x\right) dx\right\vert \\
&\leq &L_{-1}\left( g\left( a\right) ,g\left( b\right) \right) -\frac{1}{b-a}%
\int_{a}^{b}g\left( x\right) dx.
\end{eqnarray*}%
The proof is completed.
\end{proof}

\end{document}